\newcounter{thmcounter}
\numberwithin{thmcounter}{section}
\numberwithin{equation}{thmcounter}
\newtheorem{theorem}[thmcounter]{Theorem}
\newtheorem{proposition}[thmcounter]{Proposition}
\newtheorem{lemma}[thmcounter]{Lemma}
\newtheorem{corollary}[thmcounter]{Corollary}
\theoremstyle{definition}
\newtheorem{definition}[thmcounter]{Definition}
\newtheorem{remark}[thmcounter]{Remark}
\newtheoremstyle{claim}{9pt}{3pt}{}{\parindent}{\bf}{.}{1em}{}
\theoremstyle{claim}
\newtheorem{claim}[equation]{Claim}
\newenvironment{namelist}[1]{%
\begin{list}{}
{
\settowidth{\labelwidth}{#1}%
\setlength{\labelsep}{0.3em}%
\setlength{\leftmargin}{\labelwidth}%
\addtolength{\leftmargin}{\labelsep}}}{%
\end{list}}
\newcommand{\nR}{\mathbb{R}}                     
\newcommand{\nC}{\mathbb{C}}                     
\newcommand{\nQ}{\mathbb{Q}}                     
\newcommand{\nP}{\mathbb{P}}                     
\newcommand{\sO}{\mathscr{O}}                    
\newcommand{\sI}{\mathscr{I}}                    
\newcommand{\mf}[1]{\mathfrak{#1}}
\DeclareMathOperator{\udiv}{div}                 
\DeclareMathOperator{\Fitt}{Fitt}                
\DeclareMathOperator{\mld}{mld}                  
\DeclareMathOperator{\Supp}{Supp}                
\DeclareMathOperator{\reg}{reg}                  
\DeclareMathOperator{\Exc}{Exc}                  
\newcommand{\ord}{\mbox{ord}}                    
\newcounter{rkcounter}             
\begin{document}

\title[geometric nullstellensatz and symbolic powers]{Geometric nullstellensatz and symbolic powers  on arbitrary varieties}
\author{Wenbo Niu}

\address{Department of Mathematics, Purdue University, West Lafayette, IN 47907-2067, USA}
\email{niu6@math.purdue.edu}

\subjclass[2010]{13A10, 14Q20}

\keywords{Geometric Nullstellensatz, symbolic powers}

\begin{abstract} In recent years, a multiplier ideal defined on arbitrary varieties, so called Mather-Jacobian multiplier ideal, has been developed independently by Ein-Ishii-Mustata and de Fernex-Docampo. With this new tool, we have a chance of extending some classical results proved in nonsingular case to arbitrary varieties to establish their general forms. In this paper, we first extend a result of geometric nullstellensatz due to Ein-Lazarsfeld in nonsingular case to any projective varieties. Then we prove a result on comparison of symbolic powers with ordinary powers on any varieties, which extends results of Ein-Lazarsfeld-Smith and Hochster-Huneke.

\end{abstract}

\maketitle

\section{Introduction}
Throughout the paper, for simplicity, we work over the complex number field $k:=\nC$. A scheme is always of finite type over $k$ and a variety is a reduced irreducible scheme.

Our first result extends a geometric nullstellensatz due to Ein and Lazarsfeld \cite{Ein:GeomNull}. For the motivation and background on nullstellensatz we refer to the same paper which has a very detailed introduction.
\begin{theorem}\label{p:02} Let $X$ be a projective variety of dimension $n$ with an ample line bundle $L$. Let $\mf{a}$ be an ideal sheaf of $\sO_X$. Assume that $\mf{a}\otimes L$ is globally generated. Then one has
$$\widehat{\sI}(\sO_X)\cdot(\sqrt{\mf{a}})^{n\cdot\deg_LX}\subseteq \mf{a},$$
where the ideal $\widehat{\sI}(\sO_X)$ is the Mather-Jacobian multiplier ideal of $X$.
If assume further that $X$ is normal $\nQ$-Gorenstein, then one can replace $\widehat{\sI}(\sO_X)$ by the usual multiplier ideal $\sI(\sO_X)$ of $X$ in the formula.
\end{theorem}
The ideal $\widehat{\sI}(\sO_X)$ depends only on $X$ and measures its singularities. It is closely related to the Jacobian ideal $\mf{j}_X$ of $X$. Specifically, we have the inclusion $\mf{j}_X\subseteq\overline{\mf{j}}_X\subseteq \widehat{\sI}(\sO_X)$, where $\overline{\mf{j}}_X$ is the integral closure of $\mf{j}_X$. A nullstellensatz of the Jacobian ideal of a variety is also proved in Corollary \ref{p:07}. The proof of theorem follows the spirit of \cite{Ein:GeomNull} utilizing appropriate multiplier ideals. Roughly speaking, we insert an  auxiliary ideal, $\widehat{\sI}(\sO_X)$ or $\sI(\sO_X)$, into the formula obtained by Ein and Lazarsfeld. However, if we put some strong singularity conditions on a variety, then the auxiliary ideal will be trivial so that a clean formula can be obtained (see Corollary \ref{p:08} and Corollary \ref{p:09}).

Our second result gives a comparison of symbolic powers with ordinary powers for a radical ideal on arbitrary varieties. Let $X$ be a variety and  $Z$ be a proper reduced subscheme of $X$ defined by an ideal sheaf $\mf{q}$. Assume that every component of $Z$ has codimension $\leq e$. The $t$-th symbolic power $\mf{q}^t$ is then defined on any affine open set $U$ by $\mf{q}^{(t)}(U)=\{f\in \sO_X(U)| f\in \mf{m}^t_\eta, \mbox{ for all generic point $\eta$ of } Z\}$, where $\mf{m}_\eta$ means the maximal ideal in the local ring $\sO_{X,\eta}$. When $X$ is nonsingular, a surprising result of Ein, Lazarsfeld and Smith \cite{Ein:UniBdSymPw} shows that $\mf{q}^{(me)}\subseteq \mf{q}^m$ for any $m\geq 1$. Based on tight closure theory and reduction to characteristic $p$,  this result has been generalized by Hochster and Huneke \cite{Huneke:CompSymbolicPowers} and in particular when $X$ is singular they showed that $\mf{j}_X^{m+1}\cdot \mf{q}^{(me)}\subseteq \mf{q}^m$ where $\mf{j}_X$ is the Jacobian ideal of $X$.\footnote{Hochster and Huneke considered more general case in the setting of commutative algebra. However, here we stay in our geometric setting of this problem.}  In their paper, Hochster and Huneke also asked if one can drop the power of $\mf{j}_X^{m+1}$ by $1$ to be $\mf{j}_X^m$ \cite[5. Quesitons]{Huneke:CompSymbolicPowers}. Using reduction to characteristic $p$, Takagi investigated this problem in \cite{Takagi:FormSing} by showing that it is the case if $X$ is normal $\nQ$-Gorenstein.\footnote{After the first version of the paper was posted online, Takagi has informed me that his method in fact works without assuming normal $\nQ$-Gorenstein.} Now by using Mather-Jacobian multiplier ideals and inspired by the pioneer work of Ein, Lazarsfeld and Smith, we are able to give a geometric approach to this question and drop the power of the Jacobian ideal as expected.

\begin{theorem}\label{p:04} Let $X$ be a variety and $Z$ be a proper reduced subscheme of $X$ defined by an ideal sheaf $\mf{q}$. Assume that every component of $Z$ has codimension $\leq e$. Then for any $m\geq 1$,
$$\widehat{\sI}(\mf{j}_X^{m-1})\cdot \mf{q}^{(me)}\subseteq \mf{q}^m,$$
where $\mf{j}_X$ is the Jacobian ideal of $X$ and $\widehat{\sI}(\mf{j}_X^{m-1})$ is the Mather-Jacobian multiplier ideal of $\mf{j}_X^{m-1}$.
\end{theorem}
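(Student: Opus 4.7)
The plan is to mimic the proof of Ein--Lazarsfeld--Smith, replacing ordinary multiplier ideals throughout by the Mather multiplier ideal $\widehat{\J}$ of Ein--Ishii--Mustata and de Fernex--Docampo. Attached to the graded system of symbolic powers $\mf{q}^{(\bullet)}=\{\mf{q}^{(p)}\}_{p\ge 1}$ I would introduce the asymptotic Mather multiplier ideal $\widehat{\J}(c\cdot \mf{q}^{(\bullet)})$, defined as the maximal member of $\widehat{\J}\bigl(\tfrac{c}{p}\cdot \mf{q}^{(p)}\bigr)$ for sufficiently divisible $p$. Provided the three inclusions below go through, the theorem falls out of the telescoping chain
$$
\mf{j}_X^m\cdot \mf{q}^{(me)}\ \subseteq\ \mf{j}_X^m\cdot \widehat{\J}(me\cdot \mf{q}^{(\bullet)})\ \subseteq\ \mf{j}_X^m\cdot\widehat{\J}(e\cdot\mf{q}^{(\bullet)})^m\ \subseteq\ \bigl(\mf{j}_X\cdot \widehat{\J}(e\cdot\mf{q}^{(\bullet)})\bigr)^m\ \subseteq\ \mf{q}^m.
$$

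The first inclusion amounts to $\mf{q}^{(p)}\subseteq \widehat{\J}(p\cdot \mf{q}^{(\bullet)})$ for every $p$, and should follow from the general containment $\mf{a}\subseteq \widehat{\J}(\mf{a})$. Unlike its analogue for the ordinary multiplier ideal on a singular variety, this does hold in the Mather setting because the Mather discrepancy $\hat{K}_{\tilde{X}/X}$ is effective on any log resolution factoring through the Nash blowup; applying this to $\mf{a}=\mf{q}^{(p)}$ and passing to the asymptotic envelope gives the claim. The second inclusion is asymptotic subadditivity, $\widehat{\J}(me\cdot \mf{q}^{(\bullet)})\subseteq \widehat{\J}(e\cdot \mf{q}^{(\bullet)})^m$, which should follow from the Mather-version subadditivity theorem of the works cited above by the standard limiting argument along the graded family.

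The third inclusion, the Skoda-type statement $\mf{j}_X\cdot \widehat{\J}(e\cdot \mf{q}^{(\bullet)})\subseteq \mf{q}$, is the crux, and I would check it at each generic point $\eta$ of a codimension-$e'$ component of $Z$, with $e'\le e$. By the very definition of symbolic powers, $\mf{q}^{(p)}\sO_{X,\eta}=\mf{m}_\eta^p$, so the graded system restricts to the ordinary $\mf{m}_\eta$-adic filtration on the local ring $\sO_{X,\eta}$. A Brian\c{c}on--Skoda / Skoda statement in the Mather setting should then yield the local inclusion, with a single Jacobian factor forced by the fact that $\sO_{X,\eta}$ need not be regular and that $\widehat{\J}$ differs from the ordinary multiplier ideal precisely by a Jacobian twist. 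The hard part is exactly this Skoda step: squeezing out \emph{exactly one} factor of $\mf{j}_X$ in the generic-point analysis is the entire point of the improvement over Hochster--Huneke, since a second Jacobian factor would propagate $m$-fold through the subadditivity step and reintroduce their $\mf{j}_X^{m+1}$ bound.
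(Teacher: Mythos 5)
Your overall skeleton --- asymptotic Mather multiplier ideals, subadditivity, then a Skoda-type bound checked at generic points --- matches the paper's, but the bookkeeping of Jacobian factors is reversed, and this is a substantive error, not a cosmetic one. The containment $\mf{a}\subseteq\widehat{\sI}(X,\mf{a})$ is \emph{false} in general: although $\widehat{K}_{Y/X}$ is indeed effective, the Mather multiplier ideal is $f_*\sO_Y(\widehat{K}_{Y/X}-J_{Y/X}-Z)$, and the subtracted divisor $J_{Y/X}$ (cut out by $\mf{j}_X\cdot\sO_Y$) ruins the would-be inclusion unless $X$ happens to be $J$-canonical. What one actually has, via $\mf{j}_X\cdot\mf{a}\subseteq \overline{\mf{j}_X\cdot\mf{a}}\subseteq f_*\sO_Y(-J_{Y/X}-Z)\subseteq\widehat{\sI}(X,\mf{a})$, costs one Jacobian factor. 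Likewise, the Mather subadditivity theorem reads $\mf{j}_X\cdot\widehat{\sI}(X,\mf{a}\cdot\mf{b})\subseteq\widehat{\sI}(X,\mf{a})\cdot\widehat{\sI}(X,\mf{b})$, so iterating it to pass from $\widehat{\sI}(X,||\mf{q}^{(me)}||)$ to $\widehat{\sI}(X,||\mf{q}^{(e)}||)^m$ costs $\mf{j}_X^{m-1}$. Your first two inclusions therefore already consume $m$ Jacobian factors, not zero.

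It follows that the Skoda step cannot afford to spend another factor: what you need is $\widehat{\sI}(X,||\mf{q}^{(e)}||)\subseteq\mf{q}$ with \emph{no} $\mf{j}_X$ in front, which is Lemma \ref{p:01} in the paper and the place where the real work lives. If that step truly ``forced'' one Jacobian factor, as you claim, the total would degrade to $\mf{j}_X^{2m}\cdot\mf{q}^{(me)}\subseteq\mf{q}^m$, which is weaker than the theorem and even weaker than Hochster--Huneke's $\mf{j}_X^{m+1}$ bound. The way the paper gets the Skoda step Jacobian-free is not a local Skoda argument on $X$ alone: after shrinking to a neighborhood of the generic point $\eta$ where $Z$ is nonsingular and symbolic powers coincide with ordinary powers, one embeds $X$ into a nonsingular ambient $A$ of codimension $c$, passes to a factorizing resolution $\varphi:\overline{A}\to A$, and uses the identity $K_{\overline{A}/A}-cR_{X/A}|_{\overline{X}}=\widehat{K}_{\overline{X}/X}-J_{\overline{X}/X}$ together with a local vanishing argument to reduce everything to the ordinary multiplier ideal $\sI(A,(c+e)Z)\subseteq\widetilde{\mf{q}}$ on the smooth ambient $A$, where the codimension bound applies cleanly. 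That passage through the nonsingular ambient variety is the key idea your proposal does not see.
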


Since we have the inclusion $\mf{j}^m_X\subseteq \overline{\mf{j}^m_X}\subseteq \widehat{\sI}(\mf{j}_X^{m-1})$, where $\overline{\mf{j}^m_X}$ is the integral closure of $\mf{j}^m_X$, the above question has an affirmative answer.
The basic idea to prove this theorem is to use asymptotic Mather-Jacobian multiplier ideals which generalize asymptotic multiplier ideals on nonsingular varieties used in \cite{Ein:UniBdSymPw}. Of course, some technical difficulties should be cleared in order to get the desired result. At this point, the paper \cite{Ein:MultIdeaMatherDis} provides an important basic theorem: subadditivity theorem for Mather-Jacobian multiplier ideals. The only one main difficulty left to us is to relate asymptotic Mather-Jacobian multiplier ideals of symbolic powers to ordinary powers. For certain generalization of the above theorem, see Remark \ref{rmk:02}.

This paper is organized as follows. In the section 2, we briefly review the theory of multiplier ideals and its asymptotic construction. In the section 3, we prove Theorem \ref{p:02} and the section 4 is devoted to Theorem \ref{p:04}.

\vspace{0.3cm}
{\em Acknowledgement}. The author would like to thank professor Lawrence Ein for his inspiring suggestions and professor Shunsuke Takagi for his kindly explaining his work and for his suggestions on Corollary \ref{p:09}. The author's thanks also go to the referee for his/her nice comments which improve the paper.

\section{Multiplier ideals}
In this section, we quickly review the theory of multiplier ideals. Let us start with the classical one defined on a normal $\nQ$-Gorenstein variety. A good reference on this topic is \cite{Lazarsfeld:PosAG2}, though it was stated there mostly in the nonsingular case.

Let $X$ be a variety. Given several ideals $\mf{a}_1,\cdots,\mf{a}_t$ on $X$, a {\em log resolution of singularities} of $X$ and $\mf{a}_1\cdots\mf{a}_t$ is a projective birational morphism $f:Y\longrightarrow X$, with $Y$ nonsingular such that for each $i$, $\mf{a}_i\cdot\sO_Y=\sO_Y(-F_i)$ for an effective divisor $F_i$ on $Y$ and the union of $\Exc(f)$ with those $F_i$'s are simple normal crossings.

A normal variety $X$ is called {\em $\nQ$-Gorenstein}, if $mK_X$ is a Cartier divisor for some integer $m>0$. For any ideal $\mf{a}\subseteq \sO_X$, take a log resolution of $\mf{a}$ as
$f:Y\longrightarrow X$ such that $\mf{a}\cdot\sO_Y=\sO_Y(-E)$ for some effective divisor $E$ on $Y$. An {\em multiplier ideal} of $\mf{a}$ of exponent $t\in \nR_{\geq 0}$ is defined to be
$$\sI(X,\mf{a}^t):=f_*\sO_{X'}(\lceil K_{X'}-f^*K_X-tE\rceil),$$
where $\lceil\ \ \rceil$ means the round up of an $\nR$-divisor.
It can be checked that $\sI(X,\mf{a}^t)$ is an ideal sheaf and is independent on the choice of the log resolution $f$. We also have a local vanishing deduced from Kawamata-Viehweg vanishing theorem, namely, $$R^if_*\sO_{X'}(\lceil K_{X'}-f^*K_X-tE\rceil)=0,\quad \mbox{for }i>0.$$ From this local vanishing, by using Koszul complex on a log resolution, we can deduce the Skoda-type formula: for $l\geq \dim X$, one has $\sI(X,\mf{a}^l)\subseteq a$. Note that these are all we  need in the paper. The multiplier ideal defined here does not have Nadel vanishing theorem in general.

Now we come to the theory of multiplier ideals on arbitrary varieties developed by Ein, Ishii and Mustata in a recent paper \cite{Ein:MultIdeaMatherDis} (see also the work of de Fernex and Docampo \cite{Roi:JDiscrepancy}). We quote some standard definitions directly from \cite{Ein:MultIdeaMatherDis} for the convenience of the reader.

Let $X$ be an $n$-dimensional variety and let $\Omega_X$ be the sheaf of differentials of $X$. Consider the morphism
$$\pi:\nP(\wedge^n\Omega_X)\longrightarrow X$$
which is an isomorphism over the nonsingular locus $X_{\reg}$ of $X$. The {\em Nash blow-up} $\widehat{X}$ is the closure of $\pi^{-1}(X_{\reg})$ in $\nP(\wedge^n\Omega_X)$ with the reduced scheme sturcture. The {\em Jacobian ideal} $\mf{j}_X$ of $X$ is defined to be the $n$-th Fitting ideal of $\Omega_X$, i.e., $\mf{j}_X:=\Fitt^n\Omega_X$.

\begin{definition} Let $X$ be a variety of dimension $n$ and $f:Y\longrightarrow X$ be a resolution of singularities factoring through the Nash blow-up of $X$. Then the image of the canonical homomorphism
$$f^*(\wedge^n\Omega_X)\longrightarrow \wedge^n\Omega_X$$
is an invertible sheaf of the form $J\cdot\wedge^n\Omega_X$ where $J$ is the invertible ideal sheaf on $Y$ that defines an effective divisor which is called the {\em Mather discrepancy divisor} and denoted by $\widehat{K}_{Y/X}$.
\end{definition}

\begin{definition} Let $X$ be a variety and $\mf{a}\subseteq \sO_X$ a nonzero ideals on $X$. Given a log resolution $f:Y\longrightarrow X$ of $X$ and $\mf{j}_X\cdot\mf{a}$ such that $\mf{a}\cdot\sO_Y=\sO_Y(-Z)$ and $\mf{j}_X\cdot\sO_Y=\sO_Y(-J_{Y/X})$ for some effective divisors $Z$ and $J_{Y/X}$ on $Y$ (such resolution automatically factors through the Nash blow-up, see Remark 2.3 of \cite{Ein:MultIdeaMatherDis}). The {\em Mather-Jacobian multiplier ideal} of $\mf{a}$ of exponent $t\in \nR_{\geq 0}$ is defined by
$$\widehat{\sI}(X,\mf{a}^t):=f_*\sO_Y(\widehat{K}_{Y/X}-J_{Y/X}-\lfloor tZ\rfloor),$$
where $\lfloor\ \ \rfloor$ means the round down of an $\nR$-divisor. Sometimes we simply write it as $\widehat{\sI}(\mf{a}^t)$.
\end{definition}

\begin{remark} (1) In \cite{Ein:MultIdeaMatherDis}, this multiplier ideal is called Mather multiplier ideal, which we have used in the first version of the paper. A similar theory also developed independently by de Fernex and Docampo in \cite{Roi:JDiscrepancy}, where they call it Jacobian multiplier ideal. Proposed by Shihoko Ishii, it is now called Mather-Jacobian multiplier ideal, or simply $MJ$-Multiplier ideal.

(2) If the variety $X$ is nonsingular, then a Mather-Jacobian multiplier ideal is simply a classical multiplier ideal. But these two notions are not necessarily equal on a $\nQ$-Gorenstein variety (cf. \cite{Ein:MultIdeaMatherDis} and \cite{Roi:JDiscrepancy}), on which both of them are defined.
\end{remark}

Now we state the asymptotic construction of Mather-Jacobian multiplier ideals. For asymptotic multiplier ideals on nonsingular varieties we refer to the book \cite{Lazarsfeld:PosAG2} or the paper \cite{Ein:UniBdSymPw}. Since the theory now has become very classical we shall be brief here.

\begin{definition} $A$ graded family of ideals $\mf{a}_\bullet=\{\mf{a}_k\}_{k\geq 1}$ on a variety $X$ is a collection of nonzero ideal sheaves $\mf{a}_k\subseteq\sO_X$ satisfying $\mf{a}_i\cdot\mf{a}_j\subseteq \mf{a}_{i+j}$ for all $i,j\geq 1$.
\end{definition}

Examples of graded family can be found in \cite[Example 1.2]{Ein:UniBdSymPw}. In particular, the collection of symbolic powers of a radical ideal is naturally a graded family of ideals.

Fix an index $l\geq 1$ in a graded family of ideals $\mf{a}_\bullet=\{\mf{a}_k\}_{k\geq 1}$ and fix a positive real number $c>0$. Consider the set of Mather-Jacobian multiplier ideals
\begin{equation}\label{eq:02}
\{\widehat{\sI}(X,\mf{a}_{pl}^{\frac{c}{p}})\}_{p\geq 1}
\end{equation}
It is standard to check (cf. \cite[Lemma 1.3.]{Ein:UniBdSymPw} or the last paragraph of \cite{Ein:MultIdeaMatherDis}) that there is a unique maximal element in the set (\ref{eq:02}), which is the so called asymptotic Mather-Jacobian multiplier ideal.

\begin{definition} Given a graded family of ideals  $\mf{a}_\bullet=\{\mf{a}_k\}_{k\geq 1}$ on a variety $X$, the {\em asymptotic Mather-Jacobian multiplier ideal} at level $l$ associated to $c>0$ is the maximal element of the set (\ref{eq:02}), and is denoted by $\widehat{\sI}(X,c\cdot||\mf{a}_{l\bullet}||)$. It is clear that for $p\gg0$, $\widehat{\sI}(X,c\cdot||\mf{a}_{l\bullet}||)=\widehat{\sI}(X,\mf{a}_{pl}^{\frac{c}{p}})$.
\end{definition}

The following proposition describes basic properties of asymptotic Mather-Jacobian multiplier ideals and extends the core technical part of \cite{Ein:UniBdSymPw}. We would not use it in the paper since we have a better Proposition \ref{p:11} for our purpose. However, we hope that it might be useful somewhere else.
\begin{proposition}\label{p:05} Let $X$ be a variety and $a_\bullet=\{\mf{a}_l\}_{l\geq 1}$ be a graded family of ideals. Let $\mf{j}_X$ be the Jacobian ideal of $X$. Then one has
\begin{itemize}
\item [(1)] for any $l\geq 1$, $\mf{j}_X\cdot\mf{a}_l\subseteq \widehat{\sI}(X,||\mf{a}_{l\bullet}||);$
\item [(2)] for any $m\geq 1$,
$$\mf{j}_X^{m-1}\cdot \widehat{\sI}(X,||\mf{a}_{ml\bullet}||)\subseteq \widehat{\sI}(X,||\mf{a}_{l\bullet}||)^m;$$
\item [(3)] suppose that $\mf{b}\subseteq\sO_X$ is an ideal such that  $\widehat{\sI}(X,||\mf{a}_{l\bullet}||)\subseteq \mf{b}$ for some fixed index $l\geq 1$, then for any $m\geq 1$,
$$\mf{j}_X^m\cdot \mf{a}_{ml}\subseteq \mf{b}^m.$$
\end{itemize}
\end{proposition}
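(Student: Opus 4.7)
The plan is to follow the well-established playbook for asymptotic multiplier ideals, adapted to the Mather setting where a factor of $\mf{j}_X$ is absorbed each time one passes from a log resolution back down to $X$. For (1), since $p=1$ lies in the family (\ref{eq:02}), we have $\widehat{\sI}(X,||\mf{a}_{l}||) \supseteq \widehat{\sI}(X,\mf{a}_{l})$, so it suffices to prove $\mf{j}_X\cdot\mf{a}_l \subseteq \widehat{\sI}(X,\mf{a}_{l})$. Fix a log resolution $f:Y\to X$ of $\mf{j}_X\cdot\mf{a}_l$ with $\mf{j}_X\cdot\sO_Y = \sO_Y(-J_{Y/X})$ and $\mf{a}_l\cdot\sO_Y=\sO_Y(-Z_l)$; by the projection formula, $\mf{j}_X\cdot\mf{a}_l \subseteq f_*\sO_Y(-J_{Y/X}-Z_l) \subseteq f_*\sO_Y(\widehat{K}_{Y/X}-J_{Y/X}-Z_l) = \widehat{\sI}(X,\mf{a}_l)$, the last inclusion using effectiveness of the Mather discrepancy divisor $\widehat{K}_{Y/X}$.

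For (2), the essential input is the subadditivity theorem for Mather multiplier ideals from \cite{Ein:MultIdeaMatherDis}, of the shape $\mf{j}_X\cdot\widehat{\sI}(X,\mf{a}^{s}\mf{b}^{t}) \subseteq \widehat{\sI}(X,\mf{a}^{s}) \cdot \widehat{\sI}(X,\mf{b}^{t})$. Iterating this $m-1$ times with $m$ equal factors yields
$$\mf{j}_X^{m-1}\cdot\widehat{\sI}(X,\mf{a}^{ms}) \subseteq \widehat{\sI}(X,\mf{a}^{s})^{m}.$$
Now choose $p$ large enough that both $\widehat{\sI}(X,||\mf{a}_{ml}||) = \widehat{\sI}(X,\tfrac{1}{p}\mf{a}_{pml})$ and $\widehat{\sI}(X,||\mf{a}_{l}||) = \widehat{\sI}(X,\tfrac{1}{pm}\mf{a}_{pml})$ hold simultaneously (possible since each family (\ref{eq:02}) is eventually stable), and apply the iterated subadditivity with $\mf{a} = \mf{a}_{pml}$ and $s = 1/(pm)$, so that $ms = 1/p$; this is exactly (2).

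Part (3) is then a direct combination of (1) applied at level $ml$ together with (2):
$$\mf{j}_X^{m}\cdot\mf{a}_{ml} \;=\; \mf{j}_X^{m-1}\cdot(\mf{j}_X\cdot\mf{a}_{ml}) \;\subseteq\; \mf{j}_X^{m-1}\cdot\widehat{\sI}(X,||\mf{a}_{ml}||) \;\subseteq\; \widehat{\sI}(X,||\mf{a}_{l}||)^{m} \;\subseteq\; \mf{b}^{m}.$$
The principal technical hurdle is (2): it rests entirely on having Mather-subadditivity available, and one must also check that both asymptotic ideals entering that step can be realized at a common index $p$, though this last point is routine. Granted subadditivity, (1) is a formal consequence of $\widehat{K}_{Y/X}\geq 0$, and (3) is pure bookkeeping.
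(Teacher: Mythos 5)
Your proof is correct and follows essentially the same route as the paper: part (1) via effectiveness of $\widehat{K}_{Y/X}$ on a log resolution of $\mf{j}_X\cdot\mf{a}_l$ (the paper passes through the integral closure $\overline{\mf{j}_X\cdot\mf{a}_l}=f_*\sO_Y(-J_{Y/X}-Z_l)$ where you invoke the projection-formula-type inclusion, but the content is identical), part (2) via iterated Mather subadditivity after choosing a common stabilizing index $p$, and part (3) by concatenating (1) at level $ml$ with (2). No gaps.
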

\begin{proof} For (1), it is enough to show $\mf{j}_X\cdot\mf{a}_l\subseteq \widehat{\sI}(X,\mf{a}_l)$. Take a log resolution $f:Y\longrightarrow X$ of $\mf{j}_X\cdot \mf{a}_l$ such that $\mf{j}_X\cdot\sO_X=\sO_X({-J_{Y/X}})$ and $\mf{a}_l\cdot\sO_X=\sO_X(-Z_l)$ where $J_{Y/X}$ and $Z_l$ are effective divisors on $Y$. Since $\widehat{K}_{Y/X}$ is effective we then have
$$\mf{j}_X\cdot \mf{a}_l\subseteq \overline{\mf{j}_X\cdot \mf{a}_l}= f_*\sO_Y(-J_{Y/X}-Z_l)\subseteq f_*\sO_Y(\widehat{K}_{Y/X}-J_{Y/X}-Z_l)=\widehat{\sI}(X,\mf{a}_l),$$
where $\overline{\mf{\tau}}$ means the integral closure of an ideal $\mf{\tau}$.

For (2), let $p\gg 0$ such that
$$\widehat{\sI}(X,||\mf{a}_{l\bullet}||)=\widehat{\sI}(X,\mf{a}_{pml}^{\frac{1}{pm}}), \mbox{ and } \widehat{\sI}(X,||\mf{a}_{ml\bullet}||)=\widehat{\sI}(X,\mf{a}_{pml}^{\frac{1}{p}}).$$
We can rewrite
$$\widehat{\sI}(X,\mf{a}_{pml}^{\frac{1}{p}})=\widehat{\sI}(X,\mf{a}_{pml}^{\frac{m}{pm}})=\widehat{\sI}(X,(\mf{a}^{\frac{1}{pm}}_{pml})^m).$$
Now we have
\begin{eqnarray*}
\mf{j}_X\cdot \widehat{\sI}(X,(\mf{a}^{\frac{1}{pm}}_{pml})^m)\subseteq \widehat{\sI}(X,\mf{j}_X\cdot(\mf{a}^{\frac{1}{pm}}_{pml})^m) &=  &\widehat{\sI}(X,\mf{j}_X\cdot(\mf{a}^{\frac{1}{pm}}_{pml})^{m-1}\cdot\mf{a}^{\frac{1}{pm}}_{pml} )\\
    &\subseteq &\widehat{\sI}(X,(\mf{a}^{\frac{1}{pm}}_{pml})^{m-1})\cdot \widehat{\sI}(X,\mf{a}^{\frac{1}{pm}}_{pml})
\end{eqnarray*}
where the left-hand-side inclusion is an immediate consequence of the definition of Mather-Jacobian multiplier ideals and the right-hand-side inclusion is from the Subadditivity Theorem \cite[Theorem 3.14]{Ein:MultIdeaMatherDis}.
Finally, (2) can be proved iteratedly.

(3) is a direct consequence of (1) and (2).
\end{proof}

\begin{proposition}\label{p:11} Let $X$ be a variety and $a_\bullet=\{\mf{a}_l\}_{l\geq 1}$ be a graded family of ideals. Let $\mf{j}_X$ be the Jacobian ideal of $X$. Then for any $l\geq 1$, one has
$$\widehat{\sI}(X,\mf{j}^{m-1}_X)\cdot \mf{a}_{ml}\subseteq \widehat{\sI}(X,||\mf{a}_{l\bullet}||)^m,\quad\mbox{for }m\geq 1.$$
In particular, if $\mf{b}$ is an ideal such that $\widehat{\sI}(X,||\mf{a}_{l\bullet}||)\subseteq \mf{b}$, then for any $l\geq 1$, one has
$$\widehat{\sI}(X,\mf{j}^{m-1}_X)\cdot \mf{a}_{ml}\subseteq \mf{b}^m,\quad\mbox{for }m\geq 1.$$
\end{proposition}
\begin{proof} Take $p\gg 0$ such that
$$\widehat{\sI}(X,||\mf{a}_{l\bullet}||)=\widehat{\sI}(X,\mf{a}_{pml}^{\frac{1}{pm}}), \mbox{ and } \widehat{\sI}(X,||\mf{a}_{ml\bullet}||)=\widehat{\sI}(X,\mf{a}_{pml}^{\frac{1}{p}}).$$
Then one has
$$\widehat{\sI}(X,\mf{j}^{m-1}_X)\cdot\mf{a}_{ml}\subseteq \widehat{\sI}(X,\mf{j}^{m-1}_X\cdot \mf{a}_{ml})\subseteq \sI(X,\mf{j}^{m-1}_X\cdot \mf{a}^{\frac{1}{p}}_{pml}).$$
The first inclusion above is from the definition. The second one is because $\mf{a}^p_{ml}\subseteq \mf{a}_{pml}$ and therefore $\widehat{\sI}(X,\mf{j}^{m-1}_X\cdot \mf{a}_{ml})=\widehat{\sI}(X,\mf{j}^{m-1}_X\cdot(\mf{a}^p_{ml})^{\frac{1}{p}})\subseteq \widehat{\sI}(X,\mf{j}^{m-1}_X\cdot\mf{a}^{\frac{1}{p}}_{pml})$.

Finally, we apply Subadditivity Theorem \cite[Theorem 3.14]{Ein:MultIdeaMatherDis} $(m-1)$-times to the ideal $\widehat{\sI}(X,\mf{j}^{m-1}_X\cdot \mf{a}^{\frac{1}{p}}_{pml})=\widehat{\sI}(X,\mf{j}^{m-1}_X\cdot (\mf{a}^{\frac{1}{pm}}_{pml})^m)$ to deduce the result.
\end{proof}

\section{Geometric nullstellensatz on a projective variety}
In this section, based on Mather-Jacobian multiplier ideals we prove Theorem \ref{p:02} and give some improved results under certain singularity conditions. Our approach follows the work of \cite{Ein:GeomNull}, to which we refer the reader for further information.

We recall the distinguished subvarieties associated to an ideal sheaf. Let $X$ be a variety and $\mf{a}$ be an ideal of $\sO_X$. Let $\nu:W\longrightarrow X$ be the normalization of the blowing-up of $X$ along $\mf{a}$ so that $\mf{a}\cdot \sO_W=\sO_W(-E)$ where $E$ is an effective Cartier divisor on $W$. We can write
$$E=\sum_{i=1}^t r_iE_i$$
as a sum of distinct prime divisors $E_i$'s with some positive integer coefficients $r_i$. Write $Z_i=\nu(E_i)$ to be the image of $E_i$ on $X$ with the reduced scheme structure. Then $Z_i$'s are called the {\em distinguished subvarieties} of $\mf{a}$ with the coefficient $r_i$. The following proposition bounds the coefficients $r_i$'s of distinguished subvarieties in terms of global invariants. It was proved in the nonsingular case in \cite{Ein:GeomNull}. But it can proved for any projective variety by using the same intersection theory computation as in \cite{Ein:GeomNull}. In fact, the reader can also find a proof in \cite[Theorem 5.4.18]{Lazarsfeld:PosAG1} by setting $s=1$ in (5.20) there.

\begin{proposition}[{\cite[Proposition 3.1]{Ein:GeomNull}}]\label{p:03} Let $X$ be a projective variety with an ample line bundle $L$. Let $\mf{a}$ be an ideal sheaf of $\sO_X$ such that $\mf{a}\otimes L$ is generated by global sections. Then
$$\sum^t_{i=1}r_i\cdot\deg_LZ_i\leq \deg_LX.$$
where $Z_i$'s are distinguished subvarieties of $\mf{a}$ with coefficients $r_i$'s.
\end{proposition}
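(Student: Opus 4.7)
The plan is to prove this as a Bezout-style inequality on the normalized blow-up of $\mf{a}$. Let $\nu\colon W\longrightarrow X$ denote that blow-up, with $\mf{a}\cdot\sO_W=\sO_W(-E)$ and $E=\sum_{i=1}^tr_iE_i$. Set $A:=\nu^*L$, which is nef because $L$ is ample, and $M:=dA-E$. The assumption that $\mf{a}\otimes L^d$ is globally generated on $X$ pulls back to a surjection $H^0(X,\mf{a}\otimes L^d)\otimes\sO_W\twoheadrightarrow\sO_W(dA-E)$, so $M$ is globally generated, hence nef, on $W$. The projection formula for the birational morphism $\nu$ then gives the basic identity
$$d^n\deg_LX \;=\; d^n\cdot L^n \;=\; (dA)^n_W \;=\; (M+E)^n_W.$$

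Next, for each distinguished subvariety $Z_i$ of codimension $c_i$, I would exploit the fact that $\sO_W(-E)$ is $\nu$-ample and the restricted morphism $\nu|_{E_i}\colon E_i\twoheadrightarrow Z_i$ is surjective of relative dimension $c_i-1$. Combined with the projection formula applied to $\nu|_{E_i}$, this yields a lower bound of the form
$$A^{\dim Z_i}\cdot(-E)^{c_i-1}\cdot E_i \;\geq\; \deg_L Z_i,$$
so that, after multiplication by $d^{\dim Z_i}$, the quantity $d^{\dim Z_i}\deg_LZ_i$ appears as a ``$Z_i$-contribution'' inside an intersection product on $W$. The core of the proof is then to expand $(M+E)^n$ and show that the resulting sum breaks into pieces that are all non-negative, using nefness of $M$ and $A$ together with the effectiveness of intersections of distinct $E_j$'s, and that the $Z_i$-contributions can be summed linearly in $r_i$ to a total at least $\sum_ir_i\cdot d^{\dim Z_i}\deg_LZ_i$.

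The main obstacle will be the intersection-theoretic bookkeeping of this last step: namely, organizing the multinomial expansion of $(M+E)^n=\bigl(M+\sum_ir_iE_i\bigr)^n$ so that (i) every term is manifestly non-negative and (ii) the coefficient of $d^{\dim Z_i}\deg_LZ_i$ recovered for each $i$ is exactly $r_i$, not a higher power of $r_i$ as a naive multinomial expansion would suggest. A convenient way to package the argument is Fulton's Segre-class formalism: the Segre class $s(V(\mf{a}),X)=\nu_*\bigl(c(\sO_W(E))^{-1}\cap[W]\bigr)$ decomposes so that each distinguished subvariety $Z_i$ appears with multiplicity $r_i$ in the appropriate codimensional piece, and the claimed inequality reduces to intersecting this Segre-class decomposition with the appropriate powers of $L^d$ on $X$ and comparing against $(dA)^n$ on $W$, using that $M$ is nef.
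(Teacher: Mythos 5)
Note first that the paper does not reprove this proposition: it is quoted verbatim from Ein--Lazarsfeld \cite{Ein:GeomNull}, so the comparison has to be against the argument in that source. Your framework matches it exactly: pass to the normalized blow-up $\nu\colon W\to X$, set $A=\nu^*L$ and $M=dA-E$, observe that $M$ is globally generated (hence nef) because $\mathfrak{a}\otimes L^d$ is, and reduce everything to bounding $(dA)^n=(M+E)^n$ on $W$. You also correctly identify the local input on each $E_i$: the general fiber $F$ of $\nu|_{E_i}\colon E_i\to Z_i$ has dimension $c_i-1$, $A$ is numerically trivial on $F$, $M|_F\equiv -E|_F$ is ample, and pushing forward gives $(dA)^{\dim Z_i}\cdot M^{c_i-1}\cdot E_i\ge d^{\dim Z_i}\deg_L Z_i$.

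The gap is precisely the one you flag yourself: a naive multinomial expansion of $(M+E)^n$ produces products of several $E_j$'s (not effective, so not manifestly nonnegative) with coefficients $r_i^k$ rather than $r_i$. The device that resolves this in Ein--Lazarsfeld is not a Segre-class computation but the elementary telescoping identity
\[
(dA)^n - M^n \;=\; \bigl((dA)-M\bigr)\sum_{j=0}^{n-1}(dA)^{\,j}\,M^{\,n-1-j}
\;=\; E\cdot\sum_{j=0}^{n-1}(dA)^{\,j}\,M^{\,n-1-j},
\]
which expresses $(dA)^n-M^n$ with $E$ appearing to the first power only. Since $M$ is nef, $M^n\ge 0$; since $A$, $M$ are nef and each $E_i$ is effective, every term $(dA)^{\,j}M^{\,n-1-j}\cdot E_i$ is $\ge 0$; and for each $i$ the single term $j=\dim Z_i$ already contributes $r_i\,d^{\dim Z_i}\deg_L Z_i$ after push-forward. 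Summing over $i$ gives the bound, with no cross-terms to control. Your alternative suggestion to repackage this via the Segre class $s(V(\mathfrak{a}),X)=\nu_*\bigl(c(\sO_W(E))^{-1}\cap[W]\bigr)$ is in the right spirit (the $Z_i$ do appear there with coefficients built from the $r_i$), but as sketched it does not by itself yield the inequality -- you would still have to argue away the mixed terms, which is exactly the bookkeeping you say you have not done. The telescoping identity is the missing idea, and it is both the cleaner and the more elementary route.
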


Now we prove Theorem \ref{p:02}. The basic idea is that we use a multiplier ideal as a bridge to connect an ideal with its radical.

\begin{proof}[Proof of Theorem \ref{p:02}] Let $Z_i$, $i=1,\cdots, t$, be the distinguished subvarieties of $\mf{a}$ with the coefficient $r_i$ defined by the ideal $\mf{q}_{Z_i}$.

\begin{claim}\label{cl:02} For $l\geq 0$ one has the inclusion
$$\widehat{\sI}(\sO_X)\cdot(\mf{q}^{(r_1l)}_{Z_1}\cap\cdots\cap \mf{q}^{(r_tl)}_{Z_t})\subseteq \widehat{\sI}(X,\mf{a}^l).$$
\end{claim}
\begin{proof}[Proof of claim] The inclusion is local, so we can assume that $X$ is affine. Let $\mf{j}_X$ be the Jacobian ideal of $X$ and let $f:Y\longrightarrow X$ be a log resolution of $\mf{j}_X\cdot\mf{a}$ such that $\mf{j}_X\cdot \sO_Y=\sO_Y(-J_{Y/X})$ and $\mf{a}\cdot\sO_Y=\sO_Y(-Z)$ for some effective divisors $J_{Y/X}$ and $Z$ on $Y$. Recall that by definition $\widehat{\sI}(\sO_X)=f_*\sO_Y(\widehat{K}_{Y/X}-J_{Y/X})$. For any element $g\in \widehat{\sI}(\sO_X)\cdot(\mf{q}^{(r_1l)}_{Z_1}\cap\cdots\cap \mf{q}^{(r_tl)}_{Z_t})$ we shall show that
\begin{equation}\label{eq:03}
\udiv f^*g\geq -\widehat{K}_{Y/X}+J_{Y/X}+lZ,
\end{equation}
where $\udiv f^*g$ means the effective divisor defined by $f^*g$ on $Y$. To see this let $\nu:W\longrightarrow X$ be the normalization of the blowing-up of $X$ along $\mf{a}$ so that $\mf{a}\cdot \sO_W=\sO_W(-E)$ where $E$ is an effective Cartier divisor on $W$. Write $E$ as a sum of prime divisors $E=\sum_{i=1}^t r_iE_i$ with the coefficients $r_i$'s. Note that $Z_i=\nu(E_i)$ and $f$ factors through $\nu$ via a morphism $\varphi: Y\longrightarrow W$ such that $Z=\varphi^*E$. Now we can write $g=\sum b_ic_i$ as a finite sum, where $b_i\in \widehat{\sI}(\sO_X)$ and $c_i\in \mf{q}^{(r_1l)}_{Z_1}\cap\cdots\cap \mf{q}^{(r_tl)}_{Z_t}$. It suffices to show $\udiv f^*(b_ic_i)\geq -\widehat{K}_{Y/X}+J_{Y/X}+lZ$ for each summand of $g$. Thus without loss of generality, we can assume that $g=bc$ where $b\in \widehat{\sI}(\sO_X)$ and $c\in \mf{q}^{(r_1l)}_{Z_1}\cap\cdots\cap \mf{q}^{(r_tl)}_{Z_t}$. Then it is clear that $\ord_{E_i}\nu^*c\geq r_il$ and therefore $\udiv \nu^*c\geq lE$. Thus $\udiv f^*c=\udiv \varphi^*(\nu^*c)\geq \varphi^*(lE)=lZ$. On the other hand, it is clear $\udiv f^*b\geq -\widehat{K}_{Y/X}+J_{Y/X}$. Thus we conclude that $\udiv f^*g\geq -\widehat{K}_{Y/X}+J_{Y/X}+lZ$. So the claim is proved.
\end{proof}

Now by the Skoda-type formula \cite[Theorem 3.15]{Ein:MultIdeaMatherDis}, for $l\geq n$, $\widehat{\sI}(X,\mf{a}^l)\subseteq \mf{a}$. On the other hand, if we set $r:=\max_i\{r_i\}$, then it is clear that
$$(\sqrt{\mf{a}})^{rl}\subseteq \mf{q}^{(r_1l)}_{Z_1}\cap\cdots\cap \mf{q}^{(r_tl)}_{Z_t}.$$
Thus combining the above inclusion with Claim \ref{cl:02}, for $l\geq n$, we have
$$\widehat{\sI}(\sO_X)\cdot (\sqrt{\mf{a}})^{rl}\subseteq \widehat{\sI}(\sO_X)\cdot(\mf{q}^{(r_1l)}_{Z_1}\cap\cdots\cap \mf{q}^{(r_tl)}_{Z_t})\subseteq\widehat{\sI}(X,\mf{a}^l)\subseteq \mf{a}.$$
Finally by Proposition \ref{p:03} the value  $r$ is bounded by $\deg_LX$ . The result then follows immediately if we take $l=n$.

If we further assume $X$ is normal $\nQ$-Gorenstein, then the multiplier ideal $\sI(\sO_X)$ is defined. We have the following

\begin{claim}\label{cl:02} For $l\geq 0$ one has the inclusion
$$\sI(\sO_X)\cdot(\mf{q}^{(r_1l)}_{Z_1}\cap\cdots\cap \mf{q}^{(r_tl)}_{Z_t})\subseteq \sI(X,\mf{a}^l).$$
\end{claim}
\begin{proof} Again, we assume that $X$ is affine. Take a log resolution $f:Y\longrightarrow X$ of $\mf{a}$ such that  $\mf{a}\cdot\sO_Y=\sO_Y(-Z)$ for some effective divisors $Z$ on $Y$. Recall that $\sI(\sO_X)=f_*\sO_Y(\lceil K_Y-f^*K_X\rceil)$. Take an element $g\in \sI(\sO_X)\cdot(\mf{q}^{(r_1l)}_{Z_1}\cap\cdots\cap \mf{q}^{(r_tl)}_{Z_t})$. We may assume that $g=bc$ where $b\in \sI(\sO_X)$ and $c\in \mf{q}^{(r_1l)}_{Z_1}\cap\cdots\cap \mf{q}^{(r_tl)}_{Z_t}$. As the same argument in Claim \ref{cl:02}, we can show that $\udiv f^*c\geq lZ$ and $\udiv f^*b\geq -(\lceil K_Y-f^*K_X\rceil)$. Thus we conclude that $\udiv f^*g\geq -(\lceil K_Y-f^*K_X \rceil)+lZ$. So the claim is proved.
\end{proof}
Now using Skoda-type formula and the above claim, for $l\geq n$, we have
$$\sI(\sO_X)\cdot (\sqrt{\mf{a}})^{rl}\subseteq \sI(\sO_X)\cdot(\mf{q}^{(r_1l)}_{Z_1}\cap\cdots\cap \mf{q}^{(r_tl)}_{Z_t})\subseteq\sI(X,\mf{a}^l)\subseteq \mf{a}.$$
The rest is then clear.

\end{proof}

\begin{corollary}\label{p:07} Let $X$ be a projective variety of dimension $n$ with an ample line bundle $L$. Let $\mf{j}_X$ be the Jacobian ideal of $X$ and assume that $\mf{j}_X\otimes L$ is globally generated. Then one has
$$(\sqrt{\mf{j}_X})^{(n+1)\cdot \deg_LX}\subseteq \mf{j}_X.$$
\end{corollary}
\begin{proof} We follow the same notation in the proof of Theorem \ref{p:02}. But this time let $Z_i$, $i=1,\cdots, t$, be the distinguished subvarieties of $\mf{j}_X$ with the coefficient $r_i$ defined by the ideal $\mf{q}_{Z_i}$. We claim first that for $l\geq 1$,
$$\mf{q}^{(r_1l)}_{Z_1}\cap\cdots\cap \mf{q}^{(r_tl)}_{Z_t}\subseteq \widehat{\sI}(X,\mf{j}_X^{l-1}).$$
To see this, we can assume $X$ is affine. Then for any $g\in\mf{q}^{(r_1l)}_{Z_1}\cap\cdots\cap \mf{q}^{(r_tl)}_{Z_t}$, we have $\udiv f^*g\geq lJ_{Y/X}$, which means that $\udiv f^*g\geq -\widehat{K}_{Y/X}+J_{Y/X}+(l-1)J_{Y/X}$. Thus the claim is true.

Set $r:=\max_i\{r_i\}$ and use the Skoda-type formula \cite[Theorem 3.15]{Ein:MultIdeaMatherDis}, for $l\geq n+1$, we have inclusions
$$(\sqrt{\mf{j}_X})^{rl}\subseteq \mf{q}^{(r_1l)}_{Z_1}\cap\cdots\cap \mf{q}^{(r_tl)}_{Z_t}\subseteq \widehat{\sI}(X,\mf{j}_X^{l-1})\subseteq \mf{j}_X.$$
Take $l=n+1$  and use Proposition \ref{p:03} to bound $r$ by $\deg_LX$. The result then follows.
\end{proof}

Once we put strong singularity conditions on the variety $X$, we actually can remove $\widehat{\sI}(\sO_X)$ or $\sI(\sO_X)$ to get a simple formula.
Recall that a variety $X$ is called {\em $MJ$-canonical} (Mather-Jacobian canonical, cf. \cite{Ein:MultIdeaMatherDis} or \cite{Roi:JDiscrepancy}) if $\widehat{K}_{Y/X}-J_{Y/X}$ is effective in a log resolution  $f:Y\longrightarrow X$  of $\mf{j}_X$. It has been showed in \cite{Ein:MultIdeaMatherDis} and \cite{Roi:JDiscrepancy} that a $MJ$-canonical variety has rational singularities. In particular, if $X$ has $MJ$-canonical, then $\widehat{\sI}(\sO_X)=\sO_X$. Thus we have the following corollary.

\begin{corollary}\label{p:08} Let $X$ be a projective variety of dimension $n$ with an ample line bundle $L$. Let $\mf{a}$ an ideal sheaf of $\sO_X$. Assume that $X$ is $MJ$-canonical and $\mf{a}\otimes L$ is globally generated. Then one has
$$(\sqrt{\mf{a}})^{n\cdot\deg_LX}\subseteq \mf{a}.$$
\end{corollary}

A normal $\nQ$-Gorenstein variety $X$ is called {\em log terminal} if in a log resolution $f:Y\longrightarrow X$, the coefficients in the relative canonical divisor $K_{Y/X}:=K_Y-f^*K_X$ are all $>-1$. The multiplier ideal $\sI(\sO_X)$ is then trivial, i.e., $\sI(\sO_X)=\sO_X$. So we deduce the following corollary immediately.

\begin{corollary}\label{p:09} Let $X$ be a projective log terminal variety of dimension $n$ with an ample line bundle $L$. Let $\mf{a}$ be an ideal sheaf of $\sO_X$. Assume that $\mf{a}\otimes L$ is globally generated. Then one has
$$(\sqrt{\mf{a}})^{n\cdot\deg_LX}\subseteq \mf{a}.$$
\end{corollary}

\begin{remark}\label{rmk:01} (1) If $X$ is nonsingular then the formula in Theorem \ref{p:02} is the one proved in \cite{Ein:GeomNull}.

(2) Theorem \ref{p:02} works for arbitrary projective varieties but at the price that we have to insert $\widehat{\sI}(\sO_X)$ into the formula.  It would be interesting to know if one can really remove $\widehat{\sI}(\sO_X)$.

(3) In the first version of the paper, we showed the formula
$$\mf{j}_X\cdot(\sqrt{\mf{a}})^{n\cdot\deg_LX}\subseteq \mf{a}.$$
Now it is a direct consequence of Theorem \ref{p:02} since we have inclusion $\mf{j}_X\subseteq\overline{\mf{j}}_X\subseteq \widehat{\sI}(\sO_X)$.
\end{remark}

\section{Comparison of symbolic and ordinary
powers of ideals}

In this section, we prove the Theorem \ref{p:04} on comparison of symbolic powers and ordinary powers of a radical ideal. As stated in Introduction, we basically follow the idea of \cite{Ein:UniBdSymPw} but using asymptotic Mather-Jacobian multiplier ideals. The crucial point is to relate the asymptotic Mather-Jacobian multiplier ideals of symbolic powers to ordinary powers. In order to make the proof clean and transparent, we only deal with the basic case and leave the general case in a remark.

Let $X$ be a variety and $Z$ be a reduced subscheme defined by an ideal $\mf{q}$. The collection $\{\mf{q}^{(k)}\}_{k\geq 1}$ of symbolic powers of $\mf{q}$ is a graded family of ideals. So for a fixed $l\geq 1$, we obtain the asymptotic Mather-Jacobian multiplier ideal $\widehat{\sI}(X,||\mf{q}^{(l\bullet)}||)$ at level $l$. The following lemma plays the central role in the proof of Theorem \ref{p:04}. We first give an elementary proof which can be easily used in the general case. Then we give an alternative simple proof suggested by the referee, which involves deep results in the Mather-Jacobian singularity theory.

\begin{lemma}\label{p:01} Let $X$ be a variety and $\mf{q}$ be an ideal sheaf defining a proper reduced subscheme $Z$ of $X$ of codimension $\leq e$. Then
$$\widehat{\sI}(X, ||\mf{q}^{(e\bullet)}||)\subseteq \mf{q}.$$
\end{lemma}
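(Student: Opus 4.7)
The plan is to reduce the global assertion to a purely local one at each generic point of $Z = V(\mf q)$, and then apply the Skoda-type formula of \cite[Theorem 3.15]{Ein:MultIdeaMatherDis} in that localized setting. Since $\mf q$ is radical, its associated primes are exactly the prime ideals corresponding to the generic points $\eta$ of the irreducible components of $Z$; so the global inclusion $\widehat{\sI}(X, ||\mf q^{(e)}||) \subseteq \mf q$ is equivalent to the stalkwise inclusion $\widehat{\sI}(X, ||\mf q^{(e)}||)_\eta \subseteq \mf q\sO_{X,\eta}$ at every such $\eta$. Fix one such $\eta$, of codimension $c \leq e$, and write $A := \sO_{X,\eta}$ with maximal ideal $\mf m = \mf q A$, so that $\dim A = c$.

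Because $\eta$ is a generic point of $V(\mf q)$, one has $\mf q^{(k)} \cdot A = \mf m^{k}$ for every $k \geq 1$. Moreover, Mather multiplier ideals are computed by pushforward from a log resolution, hence commute with restriction to open subschemes and so with localization; passing to stalks and using that both asymptotic systems stabilize, we obtain for $p\gg 0$
$$\widehat{\sI}\bigl(X, ||\mf q^{(e)}||\bigr)_\eta \;=\; \widehat{\sI}\bigl(X, \tfrac{1}{p}\mf q^{(pe)}\bigr)_\eta \;=\; \widehat{\sI}\bigl(\Spec A, \tfrac{1}{p}\mf m^{pe}\bigr).$$
On a log resolution $g\colon Y \to \Spec A$ of $\mf j_A \cdot \mf m$, writing $\mf m\sO_Y = \sO_Y(-D)$ yields $\mf m^{pe}\sO_Y = \sO_Y(-peD)$ and $\lfloor peD/p \rfloor = eD$, so a direct unpacking of the definition gives the rescaling identity $\widehat{\sI}(\Spec A, \tfrac{1}{p}\mf m^{pe}) = \widehat{\sI}(\Spec A, \mf m^e)$, independent of $p$.

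The remaining and key step is to show $\widehat{\sI}(\Spec A, \mf m^e) \subseteq \mf m$. Since $\dim A = c \leq e$, this is exactly what the Skoda-type formula \cite[Theorem 3.15]{Ein:MultIdeaMatherDis} provides, applied on the $c$-dimensional affine scheme $\Spec A$ to the ideal $\mf m$ with exponent $e \geq \dim A$. Chaining the three displayed identities and letting $\eta$ vary over the generic points of $V(\mf q)$ completes the proof of the lemma.

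The one point that deserves care is the application of the Skoda-type formula in the local setting: the cited theorem is phrased on a variety (projective or at least of finite type), whereas here the base $\Spec A$ is an affine local scheme whose structure ring is not of finite type over $k$. However, its proof rests on a Koszul-complex argument combined with the local vanishing theorem for Mather multiplier ideals, both of which go through on any affine base (where coherent cohomology vanishes in positive degree), so the formula transfers to the local case. Alternatively one could cut $X$ by $n-c$ general hyperplane sections through $\overline{\{\eta\}}$ to replace $X$ by a $c$-dimensional projective slice on which the formula applies directly, at the cost of verifying a restriction formula for Mather multiplier ideals; this is the step I expect to be the main obstacle to making the argument fully rigorous.
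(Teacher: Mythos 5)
Your reduction to the generic point $\eta$ and the observation that symbolic powers localize to ordinary powers there are the same opening moves as the paper. But from there you take a genuinely different route, and it contains a real gap.

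\textbf{Where the approaches diverge.} You try to conclude by applying the Skoda-type formula \cite[Theorem~3.15]{Ein:MultIdeaMatherDis} directly on $\Spec A$ with $A = \sO_{X,\eta}$, using $\dim A = c \leq e$. The paper does something structurally different: after shrinking to an open neighborhood of $\eta$ (staying inside the finite-type world, of dimension $n$, not $c$), it embeds $X$ in a smooth ambient $A$ of codimension $c$, builds a factorizing resolution via $\Bl_{\widetilde{\mf q}} A$, proves the comparison $\sO_{\overline A}(-R_{X/A}) \subseteq \sO_{\overline A}(-\varphi'^*E)$ (Claim~4.2), and uses \cite[Lemma~2.12]{Ein:MultIdeaMatherDis} to translate the Mather discrepancy $\widehat K_{\overline X / X} - J_{\overline X / X}$ into the classical relative canonical $K_{\overline A / A} - cR_{X/A}$. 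The whole estimate then reduces to the \emph{classical} multiplier-ideal inclusion $\sI(A, (c+e)Z) \subseteq \widetilde{\mf q}$ on the smooth $A$, where $Z$ has codimension $\leq c+e$. No Skoda formula is invoked in Lemma~4.1, and nothing is ever computed on a non-finite-type base.

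\textbf{The gap in your argument.} You correctly flag the problem yourself: Theorem~3.15 is stated for varieties of finite type over $k$, and $\Spec A$ is not one. Your suggested fix --- ``the proof rests on a Koszul-complex argument combined with the local vanishing theorem ... both of which go through on any affine base (where coherent cohomology vanishes in positive degree)'' --- does not hold up. The local vanishing theorem for Mather multiplier ideals is a statement about higher direct images $R^i f_*$ along the resolution $f \colon Y \to X$, not about cohomology of sheaves on the affine base; affineness of the base does nothing to make $R^i f_*$ vanish. Establishing local vanishing for a resolution of $\Spec \sO_{X,\eta}$ is not for free, and your second suggestion (cutting by hyperplanes) requires a restriction theorem for Mather multiplier ideals that you acknowledge you have not verified. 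As written, the key inclusion $\widehat{\sI}(\Spec A, \mf m^e) \subseteq \mf m$ is not justified.

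\textbf{How close you are.} Your argument can likely be repaired without ever leaving a finite-type scheme, but it requires the \emph{generator-count} form of the Skoda theorem, not the dimension form. Since $k$ is infinite and $\dim \sO_{X,\eta} = c \leq e$, the ideal $\mf q$ admits, on a small enough affine neighborhood $U$ of $\eta$, a reduction $\mf b \subseteq \mf q$ generated by $c$ elements; one then argues $\widehat{\sI}(U, \mf q^e) = \widehat{\sI}(U, \mf b^e) \subseteq \mf b \subseteq \mf q$, all on the $n$-dimensional finite-type $U$. This requires checking that the Skoda theorem of \cite{Ein:MultIdeaMatherDis} is available in the form ``$\mf a$ generated by $k$ elements $\Rightarrow \widehat{\sI}(X, \mf a^l) \subseteq \mf a$ for $l \geq k$'' (your application of the dimension bound $l \geq \dim X$ on $U$ only yields the useless $l \geq n$), and it requires showing the minimal reduction at $\eta$ extends to a reduction over a neighborhood. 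Neither of these is automatic, which is presumably why the paper opts for the factorizing-resolution route, buying a proof that rests only on the well-understood smooth multiplier ideal $\sI(A, (c+e)Z)$ and on the precise Mather-discrepancy comparison formula from \cite{Ein:MultIdeaMatherDis}.
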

\begin{proof} Let $\eta$ be one generic point of $Z$. It suffices to show that $\widehat{\sI}(X, ||\mf{q}^{(e\bullet)}||)_{\eta}\subseteq \mf{q}_\eta$ since $\mf{q}$ is radical. Thus by working locally we can assume that $Z$ is integral with only one generic point $\eta$. And we shall show that there is an open neighborhood of $\eta$ on which $\widehat{\sI}(X, ||\mf{q}^{(e\bullet)}||)\subseteq \mf{q}$.

To this end, first of all, by taking an open neighborhood of $\eta$, we can assume that $X$ is a subvariety of a nonsingular variety $A$ of codimension $c$. Second, we can take an open set $U$ in $A$ containing $\eta$ such that
\begin{itemize}
\item [(1)] $Z|_U$ is nonsingular;
\item [(2)] If we write $U_X:=X\cap U$ to be an open set of X, then on $U_X$ the symbolic power $\mf{q}^{(t)}$ is the same as the ordinary power $\mf{q}^t$, i.e., for all $t\geq 1$, $\mf{q}^{(t)}|_{U_X}=\mf{q}^t|_{U_X}$.
\end{itemize}
(1) is clear because $Z$ is generically nonsingular. (2) is because there are only finitely many embedded associated primes of $\mf{q}^t$ for all $t\geq 1$ so we can choose the open set $U$ to avoid those primes which are certainly not $\mf{q}$.

Finally, we replace $A$ by this open set $U$. We denote by $\widetilde{\mf{q}}$ as the defining ideal of $Z$ in $A$ and by $I_X$ as the defining ideal of $X$ in $A$. Clearly, $I_X\subseteq \widetilde{\mf{q}}$ and $\widetilde{\mf{q}}\cdot \sO_X=\mf{q}$. By the construction above we see that $\widehat{\sI}(X, ||\mf{q}^{(e\bullet)}||)=\widehat{\sI}(X, \mf{q}^e)$ and thus in the following we shall show $\widehat{\sI}(X,\mf{q}^e)\subseteq \mf{q}$.

Now let $\mu: A'\longrightarrow A$ be a log resolution of $\widetilde{\mf{j}}_X\cdot \widetilde{\mf{q}}$ where $\widetilde{\mf{j}}_X$ is the pull back of $\mf{j}_X$ in $A$, so that we have $\widetilde{\mf{q}}\cdot \sO_{A'}=\sO_{A'}(-E)$ and $\widetilde{\mf{j}}_X\cdot\sO_{A'}=\sO_{A'}(-J)$. Let $X'$ be the strict transform of $X$ in $A'$. Then by \cite[Lemma 2.11]{Ein:MultIdeaMatherDis}, we obtain a birational morphism $\varphi':\overline{A}\longrightarrow A'$ such that the composition morhpism $\varphi:=\mu\circ \varphi'$ is a factorizing resolution of $X$ inside $A$ (for definition of factorizing resolution see \cite[Definition 2.10]{Ein:MultIdeaMatherDis}) and $\overline{X}\cup \Exc(\varphi)\cup \Supp \varphi'^*E\cup\Supp \varphi'^*J$ has simple normal crossings, where $\overline{X}$ is the strict transform of $X$ in $\overline{A}$. The restriction morphism $\varphi|_{\overline{X}}:\overline{X}\longrightarrow X$ then factors through the Nash blow-up of $X$. Furthermore, by the definition of factorizing resolution, we can write $I_X\cdot\sO_{\overline{A}}=I_{\overline{X}}\cdot \sO_{\overline{A}}(-R_{X/A})$ where $R_{X/A}$ is an effective divisor on $\overline{A}$ supported on the exceptional locus of $\varphi$.

\begin{claim}\label{cl:01} One has $\sO_{\overline{A}}(-R_{X/A})\subseteq \sO_{\overline{A}}(-\varphi'^*E)$.
\end{claim}
\begin{proof}[Proof of Claim] We write $\varphi'^*E=\sum r_iE_i$ as a sum of prime divisors of $\overline{A}$ with certain coefficients. Since $I_X\subseteq \widetilde{\mf{q}}$ we have by our construction
\begin{equation}\label{eq:01}
I_{\overline{X}}\cdot\sO_{\overline{A}}(-R_{X/A})\subset \sO_{\overline{A}}(-\sum r_iE_i).
\end{equation}
Since $E_i$'s are simple normal crossings, we have $\sO_{\overline{A}}(-\sum r_iE_i)=\cap\sO_{\overline{A}}(-r_iE_i)$. Thus it is enough to show $\sO_{\overline{A}}(-R_{X/A})\subseteq \sO_{\overline{A}}(-r_iE_i)$ for each $E_i$.
We prove this at each closed point of $\overline{A}$. For this, let $y\in \overline{A}$ be a closed point and assume that $(\sO_{\overline{A},y},\mf{m}_y)=(B,\mf{m})$ is the local ring at $y$. Then at the point $y$, we can assume that $\mf{m}=(x_1,\cdots, x_n)$ is generated by regular sequence where $n=\dim \overline{A}$, and since $\overline{X}$ meets $E_i$ as simple normal crossings, we can assume $I_{\overline{X},y}=(x_1,\cdots, x_c)$ and $E_i$ is generated by $x_{c+1}$. Then the ideal $J:=\sO_{\overline{A}}(-r_iE_i)_y=(x^{r_i}_{c+1})$ is a $(x_{c+1})$-primary ideal. Assume that $\sO_{\overline{A}}(-R_{X/A})_y=(f)$ for some element $f\in B$. Then by \ref{eq:01}, we see $x_1\cdot f\in J$. But in any case no powers of $x_1$ can be contained in $J$ thus $f$ must be in $J$ since $J$ is a primary ideal. This shows that $\sO_{\overline{A}}(-R_{X/A})_y\subseteq \sO_{\overline{A}}(-r_iE_i)_y$. Thus we conclude that $\sO_{\overline{A}}(-R_{X/A})\subseteq \sO_{\overline{A}}(-r_iE_i)$ and therefore the claim follows.
\end{proof}

Now by \cite[Lemma 2.12]{Ein:MultIdeaMatherDis} we have $K_{\overline{A}/A}-cR_{X/A}|_{\overline{X}}=\widehat{K}_{\overline{X}/X}-J_{\overline{X}/X}$. Write $D=K_{\overline{A}/A}-cR_{X/A}-e\varphi'^*E$ and consider the exact sequence
$$0\longrightarrow I_{\overline{X}}\cdot \sO_{\overline{A}}(D)\longrightarrow \sO_{\overline{A}}(D)\longrightarrow \sO_{\overline{X}}(D|_{\overline{X}})\longrightarrow 0.$$
As showed in \cite[Proof of Proposition 2.13]{Ein:MultIdeaMatherDis} by using a spectral sequence argument we deduce that $R^1\varphi_*I_{\overline{X}}\cdot \sO_{\overline{A}}(D)=0$. Also notice that $\varphi_*\sO_{\overline{X}}(D|_{\overline{X}})=\widehat{\sI}(X,\mf{q}^e)$. Thus pushing down this sequence via $\varphi$, we have a surjective morphism
$$\varphi_*\sO_{\overline{A}}(D)\longrightarrow \widehat{\sI}(X,\mf{q}^e)\longrightarrow 0.$$
This means that $\varphi_*\sO_{\overline{A}}(D)\cdot\sO_X=\widehat{\sI}(X,\mf{q}^e)$.
On the other hand, by Claim (\ref{cl:01}), we have
$$\sO_{\overline{A}}(D)\subseteq \sO_{\overline{A}}(K_{\overline{A}/A}-(c+e)\varphi'^*E).$$
Note that $\varphi_*\sO_{\overline{A}}(K_{\overline{A}/A}-(c+e)\varphi'^*E)=\sI(A,(c+e)Z)$, the usual multiplier ideal of $Z$. Since $Z$ has codimension $\leq (c+e)$ in $A$, we then see that $\sI(A,(c+e)Z)\subseteq \widetilde{\mf{q}}$. Thus $\varphi_*\sO_{\overline{A}}(D)\subseteq \widetilde{\mf{q}}$ and therefore we conclude that $\widehat{\sI}(X,\mf{q}^e)\subseteq \widetilde{\mf{q}}\cdot \sO_X=\mf{q}$.
\end{proof}

Now we are ready to prove Theorem \ref{p:04}.

\begin{proof}[Proof of Theorem \ref{p:04}] By Lemma \ref{p:01}, we have $\widehat{\sI}(X, ||\mf{q}^{(e\bullet)}||)\subseteq \mf{q}$. Then apply Proposition \ref{p:05} we immediately get the desired result.
\end{proof}

\begin{remark}\label{rmk:02} We can generalize Theorem \ref{p:04} to higher symbolic powers easily. We state the result here and leave the details to the reader.

(1) Lemma \ref{p:01} can be generalized as follows: Let $X$ be a variety and $\mf{q}$ be an ideal sheaf defining a proper reduced subscheme $Z$ of $X$ of codimension $\leq e$. Then for an integer $l\geq e$, $\widehat{\sI}(X, ||\mf{q}^{(l\bullet)}||)\subseteq \mf{q}^{(l+1-e)}$. Consequently, we can generalize Theorem \ref{p:04} as for an integer $l\geq e$, one has $$\widehat{\sI}(\mf{j}^{m-1}_X)\cdot \mf{q}^{(ml)}\subseteq (\mf{q}^{(l+1-e)})^m,$$
for all $m\geq 1$.

(2) We also can consider unmixed ideals (cf. \cite[3. Generalization]{Ein:UniBdSymPw}) to get the following result analogue to \cite[Variant]{Ein:UniBdSymPw}. Let $\mf{q}$ be an unmixed ideal on a variety $X$, and assume that every associated subvariety of $\mf{q}$ has codimension $\leq e$. Then for an integer $l\geq e$, one has
$$\widehat{\sI}(\mf{j}^{m-1}_X)\cdot \mf{q}^{(me)}\subseteq \mf{q}^m,$$
for all $m\geq 1$.
\end{remark}

The Lemma \ref{p:01} has a simple but very interesting proof suggested by the referee. It requires knowledge about Mather-Jacobian minimal log discrepancy and its inversion of adjunction formula. The reader may find this topic in \cite{Roi:JDiscrepancy} and \cite{Ishii:MatherDis}. Note that here we use notation $\widehat{\mld}$ means the  Mather-Jacobian minimal log discrepancy, that is we need to consider ``Mather discrepancy divisor - Jacobian divisor" when we compute the discrepancy. It is also called Jacobian minimal log discrepancy in \cite{Roi:JDiscrepancy}.

\begin{proposition}\label{p:10} Let $X$ be a variety and $\mf{a}\subseteq \sO_X$ be an ideal. Let $Z\subset X$ be a proper subvariety defined by the ideal $\mf{q}_Z$ and let $\eta_Z$ be the generic point of $Z$ in $X$.
\begin{itemize}
\item [(1)] For any $t\in \nR_{\geq 0}$, if $\widehat{\mld}_{\eta_Z}(X,\mf{a}^t)\leq 0$, then one has $\widehat{\sI}(X,\mf{a}^t)\subseteq \mf{q}_Z$.
\item [(2)] Assume that $\mf{q}_Z\subseteq \widehat{\sI}(\sO_X)$ (e.g. $X$ is $MJ$-canonical), then for $t\in \nR_{\geq 0}$ one has
    $$\widehat{\mld}_{\eta_Z}(X,\mf{q}_Z^t)\leq 0 \mbox{ if and only if }\widehat{\sI}(X,\mf{q}_Z^t)\subseteq \mf{q}_Z$$
\end{itemize}

\end{proposition}
\begin{proof} (1) Since $\widehat{\mld}_{\eta_Z}(X,\mf{a}^t)\leq 0$, we have a log resolution $f:Y\longrightarrow X$ of $\mf{j}_X\cdot \mf{a}\cdot\mf{q}_Z$ such that $\mf{j}_X\cdot \sO_Y=\sO_Y(-J_{Y/X})$, $\mf{a}\cdot\sO_Y=\sO_Y(-E)$ and there is a prime divisor $F$ on $Y$ with $f(F)=Z$ and its Mather-Jacobian log discrepancy
$$\widehat{a}_F:=\ord_F(\widehat{K}_{Y/X})-\ord_FJ_{Y/X}-\ord_FE+1\leq 0.$$
Thus if we write
$$\widehat{K}_{Y/X}-J_{Y/X}-\lfloor tE\rfloor=P-N$$
where $P$ and $N$ are effective divisors without common components, then the divisor $F$ should be in the support of $N$. Hence we have
$$\widehat{\sI}(X,\mf{a}^t)=f_*\sO_Y(-N)\subseteq f_*\sO_Y(-E)\cap \sO_X.$$
But since $E$ dominates $Z$, we can check that $f_*\sO_Y(-E)\cap \sO_X=\mf{q}_Z$.

(2) The ``only if" part is from (1). So we prove ``if" part by assuming $\widehat{\sI}(X,\mf{q}_Z^t)\subseteq \mf{q}_Z$. Take a log resolution of $f:Y\longrightarrow X$ of $\mf{j}_X\cdot\mf{q}_Z$ such that $\mf{j}_X\cdot \sO_Y=\sO_Y(-J_{Y/X})$, $\mf{q}_Z\cdot\sO_Y=\sO_Y(-E)$. We write
$\widehat{K}_{Y/X}-J_{Y/X}-\lfloor tE\rfloor=P-N$
where $P$ and $N$ are effective divisors without common components. Write $N=\sum_{i=1}^s r_iE_i$ as a sum of distinct prime divisors with positive coefficients. Then notice that
\begin{equation}\label{eq:04}
\widehat{\sI}(X,\mf{q}_Z^t)=f_*\sO_Y(-N)=f_*\sO_Y(-r_1E_1)\cap\cdots\cap f_*\sO_Y(-r_sE_s).
\end{equation}
Each $f_*\sO_Y(-r_iE_i)\cap \sO_X$ is a primary ideal with a radical $f_*\sO_Y(-E_i)\cap \sO_X$. (Since $\widehat{\sI}(X,\mf{q}_Z^t)$ is already an ideal so we do not need to intersect with $\sO_X$ in the equation (\ref{eq:04}).) Hence the decomposition of (\ref{eq:04}) in fact induces a primary decomposition of $\widehat{\sI}(X,\mf{q}_Z^t)$. Now since we assume $\widehat{\sI}(X,\mf{q}_Z^t)\subseteq \mf{q}_Z$, there must be one $E_i$, say $E_1$, such that $f_*\sO_Y(-E_1)\cap \sO_X\subseteq \mf{q}_Z$. There are two possibilities. The first case is that  $E_1$ is in the support of $\widehat{K}_{Y/X}-J_{Y/X}$ with a negative coefficient. Then $f_*\sO_Y(-E_1)\cap \sO_Y$ must be an associated prime of the ideal $\widehat{\sI}(\sO_X)$. But we assume $\mf{q}_Z\subseteq \widehat{\sI}(\sO_X)$. Hence $\mf{q}_Z=f_*\sO_Y(-E_1)\cap \sO_X$. The second case is that $E_1$ is in the support of $E$. But $E$ is the support of the preimage of $Z$. Hence we have $\mf{q}_Z=f_*\sO_Y(-E_1)\cap \sO_X$. So in any case, we find that $E_1$ dominates $Z$ with negative coefficient in $\widehat{K}_{Y/X}-J_{Y/X}-\lfloor tE\rfloor$. We conclude that $\widehat{\mld}_{\eta}(X,\mf{q}_Z^t)\leq 0$.
\end{proof}

We shall use the following version of the inversion of adjunction theorem proved in \cite[Theorem 4.10]{Roi:JDiscrepancy} and \cite[Theorem 3.10]{Ishii:MatherDis}).

\begin{proposition}\label{p:12}Let $X$ be a subvariety of a nonsingular variety $A$ of codimension $c$. Let $\widetilde{\mf{a}}\subset \sO_A$ be an ideal such that $\mf{a}:=\widetilde{\mf{a}}\cdot \sO_X$ is a nonzero ideal of $\sO_X$. Let $\eta\in X$ be a point such that $W:=\overline{\{\eta\}}$ is a proper closed subset of $X$. Then for any $t\in \nR_{\geq 0}$ one has
$$\widehat{\mld}_{\eta}(X,\mf{a}^t)=\mld_{\eta}(A,\widetilde{\mf{a}}^t\cdot I^c_X)$$
\end{proposition}
\begin{proof} We can find a small open set $U$ of $\eta$ in $A$ such that $\mld_{\eta}(A,\widetilde{\mf{a}}^t\cdot I^c_X)=\mld_{W\cap U}(U,(\widetilde{\mf{a}}^t\cdot I^c_X)|_U)$ and $\widehat{\mld}_{\eta}(X,\mf{a}^t)=\widehat{\mld}_{W\cap U}(X\cap U,\mf{a}^t|_{X\cap U})$. For example, we can first fix a log resolution of $\widetilde{\mf{a}}^t\cdot I^c_X$ and then remove the centers of prime divisors appeared in the resolution, which are properly contained in $W$. Then we apply the inversion of adjunction \cite[Theorem 4.10]{Roi:JDiscrepancy} or \cite[Theorem 3.10]{Ishii:MatherDis}) on this open set $U$ to deduce the result.
\end{proof}

\begin{proof}[Alternative proof of Lemma \ref{p:01}] Using the same setting and reduction in the proof of Lemma \ref{p:01}, we just need to show $\widehat{\sI}(X,\mf{q}^e)\subseteq \mf{q}$.

Let $\eta$ be the generic point of $Z$. By using Proposition \ref{p:12}, we have $\widehat{\mld}_{\eta}(X,\mf{q}^e)=\mld_{\eta}(A,I^c_X\cdot\widetilde{\mf{q}}^e)$. Now take a log resolution of $I_X\cdot\widetilde{\mf{q}}$ as $f:Y\longrightarrow X$ and let $F$ be the prime divisor coming from the blowing-up of $A$ along $\widetilde{\mf{q}}$ (note that by reduction we assume $Z$ is nonsingular in $A$). Now it is an easy computation that the log discrepancy of $F$ is $\leq 0$. Hence we have $\widehat{\mld}_{\eta}(X,\mf{q}^e)\leq 0$ and then Proposition \ref{p:10} can be applied.
\end{proof}

\bibliographystyle{alpha}

\end{document}